\documentclass{amsart}
\usepackage{amsmath, amssymb}
\newtheorem{Lem}{Lemma}
\newtheorem{Def}{Definition}
\newtheorem{Them}{Theorem}
\newtheorem{Cor}{Corollary}

\newcommand{\fq}{\mathbb{F}_q}
\newcommand{\rmv}[1]{}

\begin{document}
\title{Partitions and compositions over finite fields}
\author[Muratovi\'{c}-Ribi\'{c}]{Amela Muratovi\'{c}-Ribi\'{c}}
\address{University of Sarajevo, Department of Mathematics, Zmaja od Bosne 33-35, 71000 Sarajevo, Bosnia and Herzegovina}
\email{amela@pmf.unsa.ba}

\author[Wang]{Qiang Wang}
\thanks{Research is partially supported by NSERC of Canada.}
\address{School of Mathematics and Statistics,
Carleton University, 1125 Colonel By Drive,  Ottawa, Ontario, $K1S$ $5B6$, CANADA}

\email{wang@math.carleton.ca}

\keywords{\noindent composition, partition, subset sum, polynomials, finite fields}

\subjclass[2000]{11B30, 05A15, 11T06}

\begin{abstract}
In this paper we  find exact formulas for the numbers of partitions
and compositions of an element into $m$ parts over a finite field, 
i.e. we find the number of nonzero solutions of the equation
$x_1+x_2+\cdots +x_m=z$ over a finite field when the order does not
matter and when it does, respectively. We also give an application of
our results in the study of polynomials of prescribed ranges over
finite fields.
\end{abstract}

\maketitle
\section{Introduction}

 Let $n$ and $m$ be positive integers. A \emph{composition} of $n$ is an ordered list of positive integers whose sum is $n$.
 A \emph{$m$-composition} of $n$ is an ordered list of $m$ positive integers ($m$ parts) whose sum is $n$.
 It is well known that there is a bijection between all $m$-compositions of $n$ and $(m-1)$-subsets
 of $[n-1] = \{1, 2, \ldots, n-1\}$ and thus there are ${n-1 \choose m-1}$ $m$-compositions of
 $n$ and $2^{n-1}$ compositions of $n$. Similarly, a \emph{weak composition} of $n$ is an ordered list of non-negative
 integers whose sum is $n$  and a \emph{weak $m$-composition}  of $n$ is an ordered list of $m$ non-negative parts whose
 sum is $n$. Using substitution of variables, we can easily obtain that the number of weak $m$-compositions of $n$
 (i.e., the number of non-negative integer solutions to $x_1+ x_2 + \cdots + x_m = n$) is equal to the number
 of $m$-compositions of $n + m$ (i.e., the number of positive integer solutions to $x_1+ x_2 + \cdots + x_m = n+m$), which is
   ${n+m-1 \choose m-1} = {n+m -1 \choose n}$.
The combinatorial interpretation of ${n+m-1 \choose m-1} = {n+m -1
\choose n}$ is the number of ways in selecting $n$-multisets from a set $M$
with $m$ elements, which is sometimes called $n$-combinations of $M$
with repetitions.  Disregarding the order of the summands, we have
the concepts of partitions
of $n$ into $m$ parts, partitions of $n$ into at most $m$ parts,  and so on. For more details we refer the
reader to \cite{Stanley}.

Let $\fq$ be a finite fields of $q=p^r$ elements. The subset problem
over a subset  $D \subseteq \fq$ is to determine  for a given $z \in
\fq$, if there is a nonempty subset $\{x_1, x_2, \ldots, x_m\}
\subseteq D$ such that $x_1 + x_2  + \cdots +x_m  = z$. This subset
sum problem is known to be $NP$-complete.  In the study of the
subset sum problem over finite fields, Li and Wan \cite{LiWan}
estimated the number, $N(m, b, D) = \# \{ \{x_1, x_2, \ldots, x_m\}
\subseteq  D \mid  x_1 + x_2  + \cdots x_m  = z \}$,  of $m$-subsets of $D \subseteq \fq$ whose sum is $z \in \fq$. In
particular, exact formulas are obtained in cases that $D= \fq$ or
$\fq^*$ or $\fq \setminus \{0, 1\}$.  Similarly, we are interested
in the number $S(m, z, D) = \# \{ (x_1, x_2, \ldots, x_m) \in   D
\times D \times \cdots \times D \mid  x_1 + x_2  + \cdots + x_m = z
\}$, that is, the number of  ordered $m$-tuples  whose sum is $z$
and each coordinate belongs to $D \subseteq \fq$, as well as the
number $M(m, z, D)$ which counts the number of  $m$-multisets of $D \subseteq \fq$ whose sum is $z \in \fq$.   In
particular, when $D = \fq$ or $\fq^*$,  this motivated us to
introduce the following.

\begin{Def}
A \emph{partition} of $z\in \fq $ into $m$ parts is a multiset of
$m$ nonzero elements in $\fq^*$ whose sum is $z$.  The $m$ nonzero
elements are the parts of the partition. We denote by $M(m, z,
\fq^*)$ or $\tilde{P}_m(z)$ the  number of partitions of $z$ into $m$ parts
over $\fq$. Similarly,  we denote by $M(m, z, \fq)$ or
$\hat{P}_m(z)$ the  number of partitions of $z$ into at most $m$
parts over $\fq$ and by $\tilde{P}(z)$ the total number of partitions of $z$
over finite field $\fq$.
\end{Def}

We remark that $N(m, z, \fq^*)$ is the number of partitions of an
element $z$ over finite field $\fq$ such that all summands are
distinct, and  $M(m, z, \fq^*)$ is the number of partitions of an
element $z$ into $m$ parts over finite field $\fq$, dropping the
restriction that all summands are distinct.

We also remark that in the study of polynomials of prescribed ranges
over finite fields \cite{MuWa} there has arisen a need as well for
counting the number $M(m, 0, \fq)$ of partitions of $0$ with at most
$m$ parts over finite field $\fq$, which in turn leads us  to answer
a recent conjecture by G\'{a}cs et al on polynomials of prescribed
ranges over finite fields \cite{gacsetal}.

In this article  we first obtain an exact formula for the number of partitions of an element $z \in \fq$ into $m$ parts over $\fq$.

\begin{Them}\label{partitions}
Let $m$ be a non-negative integer, $\fq$ be a finite field of $q=p^r$ elements with prime $p$,  and $z\in \fq$.  The number of
partitions of $z$ into $m$ parts over $\fq$ is given by
$$\tilde{P}_m(z)=\frac{1}{q}\binom{q+m-2}{m}+D_m(z),$$
where
\begin{eqnarray*}
D_m(z) &= \left\{
\begin{array}{ll}
0, &   \mbox{if} ~m\not\equiv 0 ~(\bmod~p) ~\mbox{and}~ m\not\equiv 1 ~(\bmod~p); \\
\frac{q-1}{q}\binom{q/p - 1+j}{j}, &  \mbox{if}~  m =jp,  j \geq 0,  ~\mbox{and} ~z=0 ;\\
-\frac{q-1}{q}\binom{q/p - 1+j}{j}, &  \mbox{if}~  m = jp+ 1, j \geq 0,  ~\mbox{and} ~z=0 ;\\
-\frac{1}{q}\binom{q/p-1+j}{j}, &  \mbox{if}~  m =jp,  j \geq 0,  ~\mbox{and}  ~z \in \fq^*;\\
\frac{1}{q}\binom{q/p-1+j}{j}, &  \mbox{if}~  m = jp+ 1,  j \geq 0,  ~\mbox{and} ~z \in \fq^*.
\end{array}
\right.
\end{eqnarray*}
\end{Them}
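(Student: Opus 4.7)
The plan is to use the orthogonality of additive characters of $\fq$ to pick out the constraint $x_1+\cdots+x_m=z$. Writing the indicator as $\mathbf{1}[x_1+\cdots+x_m=z] = \frac{1}{q}\sum_\psi \psi(x_1+\cdots+x_m-z)$ where $\psi$ runs over all additive characters, and interchanging the sum over $m$-multisets from $\fq^*$ with the character sum, I obtain
$$\tilde{P}_m(z) = \frac{1}{q}\sum_\psi \psi(-z)\sum_{\{x_1,\ldots,x_m\}\subseteq \fq^*} \prod_{i=1}^m \psi(x_i).$$
The trivial character contributes $\frac{1}{q}\binom{q+m-2}{m}$, the total count of $m$-multisets from $\fq^*$ with no sum constraint; this matches the main term.

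For a nontrivial $\psi$, the inner sum over $m$-multisets equals the coefficient of $y^m$ in $\prod_{a\in\fq^*}(1-\psi(a)y)^{-1}$, since an element $a$ of multiplicity $k$ contributes $\psi(a)^k y^k$. The key simplification is that any nontrivial $\psi$ factors through the trace $\mathrm{Tr}:\fq\to\mathbb{F}_p$, so on $\fq$ its values are $p$-th roots of unity, each attained with multiplicity $q/p$. Hence over $\fq^*$ the value $1$ is attained $q/p-1$ times and each other $p$-th root $\zeta_p^k$ is attained $q/p$ times, and the cyclotomic identity $\prod_{k=0}^{p-1}(1-\zeta_p^k y)=1-y^p$ collapses the product to
$$\prod_{a\in\fq^*}\frac{1}{1-\psi(a)y} = \frac{1-y}{(1-y^p)^{q/p}},$$
independent of the choice of nontrivial $\psi$.

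Expanding $(1-y^p)^{-q/p}=\sum_{j\geq 0}\binom{q/p-1+j}{j}y^{pj}$ and multiplying by $(1-y)$, the coefficient $C_m$ of $y^m$ is $\binom{q/p-1+j}{j}$ if $m=jp$, $-\binom{q/p-1+j}{j}$ if $m=jp+1$, and $0$ otherwise. Combined with character orthogonality $\sum_{\psi\neq\psi_0}\psi(-z)=q-1$ for $z=0$ and $-1$ for $z\in\fq^*$, this yields
$$\tilde{P}_m(z) = \frac{1}{q}\binom{q+m-2}{m} + \frac{C_m}{q}\sum_{\psi\neq\psi_0}\psi(-z),$$
which matches the stated $D_m(z)$ in every case.

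The main obstacle is recognizing the closed form of the character-sum generating function; once one observes that the uniform multiplicity of $\psi$-values on $\fq$ collapses the product to $\frac{1-y}{(1-y^p)^{q/p}}$ via the cyclotomic identity, everything reduces to routine coefficient extraction and an application of character orthogonality.
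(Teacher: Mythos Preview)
Your proof is correct and takes a genuinely different route from the paper. The paper proceeds combinatorially: it first shows by a scaling bijection that $\tilde{P}_m(a)=\tilde{P}_m(1)$ for all $a\in\fq^*$, then proves $\tilde{P}_m(0)=\tilde{P}_m(1)$ when $m\not\equiv 0,1\pmod p$ by a delicate bijective argument, and handles the remaining residues via an inclusion--exclusion over the families $\mathcal{B}_a$ of multisets containing a prescribed element, feeding in Li and Wan's exact formula for $N(k,b,\fq^*)$; this produces a recursion for $D_{up}(0)$ which is then solved by a generating function in the index $u$. Your approach bypasses all of this by applying additive-character orthogonality directly to the multiset count, recognizing that the inner sum over $m$-multisets is the coefficient of $y^m$ in $\prod_{a\in\fq^*}(1-\psi(a)y)^{-1}$, and collapsing that product via the cyclotomic identity to $(1-y)/(1-y^p)^{q/p}$, which is independent of the nontrivial $\psi$. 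What your method buys is brevity and a transparent explanation of why the correction term depends only on $m\bmod p$: it falls out of the shape $(1-y)(1-y^p)^{-q/p}$. What the paper's method buys is self-containment within elementary combinatorics (no characters) and a visible link to the subset-sum formula of Li and Wan; it also makes the relation $D_{jp+1}(0)=-D_{jp}(0)$ emerge from a cancellation in the recursion rather than from the factor $(1-y)$.
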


Similarly, we have the following definition of compositions over
finite fields.

\begin{Def}
A \emph{composition} of $z\in \fq $ with $m$ parts is a solution
$(x_1, x_2, \ldots, x_m)$  to the equation
\begin{equation} \label{eqCompo}
z=x_1+x_2+\cdots +x_m,
\end{equation}
with each $x_i \in \fq^*$. Similarly, a \emph{weak composition} of $z\in \fq $ with $m$ parts is
a solution $(x_1, x_2, \ldots, x_m)$ to Equation~(\ref{eqCompo}) with each $x_i \in \fq$. 
We denote the number of compositions of $z$ having $m$ parts by
$S(m, z, \fq^*)$ or $S_m(z)$. The number of weak compositions of $z$
with $m$ parts is denoted by $S(m, z, \fq)$.  The total number of
compositions of $z$ over $\fq$ is denoted by  $S(z)$.
\end{Def}

The number of solutions to Equation~(\ref{eqCompo}) (or in more general way as diagonal equations) have been extensively studied. However, it is less studied the number of solutions  (compositions) such that none of variables are zero.
A formula for the number of compositions over $\mathbb{F}_p$  is
given in \cite{BEW}. However, we are not aware of a general formula
for arbitrary $q$ so we also include such a formula here for the
sake of completeness.

\begin{Them}\label{compositions}
Let $m>2$, $\fq$ be a finite field of $q=p^r$ elements with prime $p$, and $z\in \fq$.  The number of compositions of $z$ with
$m$ parts over $\fq$ is given by
$$S_m(z)=(q-1)^{m-2}(q-2)+S_{m-2}(z).$$
It follows that
$$S_m(0)=\frac{(q-1)^m+(-1)^m(q-1)}{q}$$
and
$$S_m(z)=\frac{(q-1)^m-(-1)^m}{q}, ~~~\mbox{if}~~ z\neq 0.$$
\end{Them}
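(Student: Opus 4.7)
My plan is to establish the recurrence by a direct combinatorial argument, verify two base cases, and then derive the closed form by induction (or by summing the recurrence geometrically).

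For the recurrence, I would group the last two coordinates $(x_{m-1}, x_m)$ of a composition by their sum $s = x_{m-1} + x_m \in \fq$. A short count shows that the number of pairs $(x_{m-1}, x_m) \in (\fq^*)^2$ with a prescribed sum $s$ is $q-1$ when $s=0$ (since $x_m = -x_{m-1}$ forces $x_{m-1}$ to range over all of $\fq^*$) and $q-2$ when $s \neq 0$ (we must exclude both $x_{m-1} = 0$ and $x_{m-1} = s$). Conditioning on $s$ then gives
\begin{equation*}
S_m(z) = (q-1)\,S_{m-2}(z) + (q-2)\sum_{s\in\fq^*} S_{m-2}(z-s).
\end{equation*}
Since $\sum_{w\in\fq}S_{m-2}(w)=(q-1)^{m-2}$ counts all tuples in $(\fq^*)^{m-2}$, the inner sum equals $(q-1)^{m-2}-S_{m-2}(z)$, and the recurrence $S_m(z) = S_{m-2}(z) + (q-2)(q-1)^{m-2}$ follows after simplification.

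For the base cases, I would compute directly that $S_1(0)=0$, $S_1(z)=1$ for $z\neq 0$, and $S_2(0)=q-1$, $S_2(z)=q-2$ for $z\neq 0$, all of which match the claimed closed forms. Then an easy induction on $m$ (with step size two, treating even and odd $m$ separately) extends the closed form: in both cases the inductive step is just
\begin{equation*}
\frac{(q-1)^{m-2}\pm c}{q} + (q-2)(q-1)^{m-2} = \frac{(q-1)^m \pm c}{q},
\end{equation*}
using $1+q(q-2)=(q-1)^2$, where $c \in \{-(-1)^m, (q-1)(-1)^m\}$ depending on whether $z\neq 0$ or $z=0$. Because the sign $(-1)^m$ flips by $+1$ every two steps, parity is preserved and the induction goes through cleanly.

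I do not expect a real obstacle here; the only point requiring care is the bookkeeping in the pair-sum computation (making sure the $s=0$ and $s\neq 0$ cases are separated correctly and that the inner sum is indexed over the correct set). Once the recurrence is in place, the closed forms are forced by two-step induction with the stated base cases, so no fancy machinery is needed.
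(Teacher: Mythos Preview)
Your proposal is correct and follows essentially the same approach as the paper: both derive the recurrence by splitting according to whether $x_{m-1}+x_m=0$ or not (your conditioning on the sum $s$ and then observing $\sum_{s\neq 0}S_{m-2}(z-s)=(q-1)^{m-2}-S_{m-2}(z)$ is exactly the paper's case split), and both then deduce the closed forms from the same base cases---the paper by explicitly telescoping the recurrence for even and odd $m$, you by the equivalent two-step induction.
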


Using the fact that additive group $(\fq,+)$ is isomorphic to the
additive group $(\mathbb{Z}_p^r,+)$,
we obtain that the numbers of partitions and compositions of
elements over $\mathbb{Z}_p^r$ are the same as the numbers of partitions
and compositions of corresponding elements over $\fq$.

Finally, we demonstrate an application of Theorem~\ref{partitions} in the study of polynomials of prescribed range. First let us recall  that the {\it range} of the  polynomial $f(x) \in \fq[x]$ is  a multiset $M$ of size $q$  such that $M = \{ f(x)
: x \in \fq \}$ as a multiset (that is, not only values, but also
multiplicities need to be the same).  Here and also in the following sections we abuse the set notation  for multisets as well.   A nice
reveal of  connections among a combinatorial number theoretical
result, polynomials of prescribed ranges and hyperplanes in vector
spaces over finite fields can be found in \cite{gacsetal}, which we refer
it to the readers  for more details.  In this paper, we obtain
the following result as an application of Theorem~\ref{partitions}.  

\begin{Them} \label{main}
Let $\fq$ be a finite field of $q =p^r$ elements. For every $\ell$
with $\frac{q}{2}\leq \ell < q-3$ there exists a mutiset $M$ with
$\sum_{b\in M} b =0$ and the highest multiplicity $\ell$ achieved at
$0\in M$ such that every polynomial over the finite field $\fq$ with
the prescribed range $M$ has degree greater than $\ell $.
\end{Them}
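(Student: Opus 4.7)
The plan is a counting argument combining Theorem~\ref{partitions} with a bound on how many multisets arise as the range of a polynomial of degree $\leq \ell$. I would first make a structural reduction: if $f \in \fq[x]$ has $\deg f \leq \ell$ and range multiset $M$ containing $0$ with multiplicity $\ell$, then $f$ has at least $\ell$ roots in $\fq$, forcing $\deg f = \ell$ and
\[
f(x) = c \prod_{a \in A}(x-a)
\]
with $A := f^{-1}(0)$ of size $\ell$ and $c \in \fq^*$ the leading coefficient. The question then becomes whether there exists an admissible multiset $M$ not of this form for any $(A,c)$.

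Next I would exploit affine symmetry. The substitution $x \mapsto sx+t$ with $(s,t) \in \fq^* \times \fq$ carries $f$ to a polynomial of the same form with the same range multiset, acting on parameters by $(A,c) \mapsto (s^{-1}(A-t),\, s^\ell c)$. Hence the map $(A,c) \mapsto M$ factors through the action of $\mathrm{Aff}(\fq) := \fq^* \ltimes \fq$ of order $q(q-1)$, so the number of realizable multisets is at most the number of $\mathrm{Aff}(\fq)$-orbits on pairs $(A,c)$. For a generic $A$ the orbit has full size $q(q-1)$, so the orbit count is close to $\binom{q}{\ell}(q-1)/[q(q-1)] = \frac{1}{q}\binom{q}{m}$, where $m := q - \ell$.

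Then, by Theorem~\ref{partitions}, the number of admissible multisets (size $q$, sum $0$, $0$ with multiplicity $\ell$, other parts in $\fq^*$) equals $\tilde{P}_m(0) = \frac{1}{q}\binom{q+m-2}{m} + D_m(0)$; the ``highest multiplicity at $0$'' condition is automatic because $m = q - \ell \leq q/2 \leq \ell$ bounds every nonzero element's multiplicity by $m \leq \ell$. Theorem~\ref{main} will follow from the strict inequality
\[
\tilde{P}_m(0) \;>\; \#\{\mathrm{Aff}(\fq)\text{-orbits on pairs } (A,c)\}.
\]

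The hard part will be establishing this inequality. The leading comparison of $\binom{q+m-2}{m}$ against $\binom{q}{m}$ is favorable for $m \geq 2$, but the correction $D_m(0)$ can be negative (when $m \equiv 1 \pmod p$), and the orbit count has its own corrections coming from subsets $A$ with nontrivial affine stabilizer (e.g.\ cosets of $\mathbb{F}_p$-subspaces, which are exactly the $A$ fixed by some nontrivial translation). Balancing these corrections is where the hypothesis $\ell < q - 3$, i.e.\ $m \geq 4$, becomes essential: at $m = 3$ the two sides turn out to coincide, and for smaller $m$ the inequality fails outright, so the hypothesis is tight and its role in the proof is to keep the counting gap strictly positive.
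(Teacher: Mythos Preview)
Your proposal is correct and mirrors the paper's approach: the reduction to $f(x)=c\prod_{a\in A}(x-a)$, the bound on realizable ranges via $\mathrm{Aff}(\fq)$-orbits on pairs $(A,c)$, and the comparison with $\tilde{P}_m(0)$ from Theorem~\ref{partitions} are exactly the paper's argument. For the step you flag as the hard part, the paper computes the orbit count exactly by Burnside's Lemma (yielding your main term $\frac{1}{q}\binom{q}{m}$ plus the stabilizer corrections you anticipate from translations and dilations) and then proves the strict inequality by a combinatorial injection---constructing disjoint families of $m$-multisets of $\fq^*$ whose sizes match each Burnside term and exhibiting one multiset outside their union.
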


We note that Theorem~\ref{main} generalizes Theorem 1 in \cite{MuWa} which disproves Conjecture 5.1 in \cite{gacsetal}.  In the following sections, we give the proofs of Theorems 1-3  respectively.

\section{Proof of Theorem~\ref{partitions}}

In this section we prove Theorem~\ref{partitions}. First of all we
prove a few technical lemmas.
\begin{Lem}\label{lemma 1}  Let $a\in
\mathbb{F}_q^*$ and  $m$  be a positive integer. Then
$\tilde{P}_m(a)=\tilde{P}_m(1)$.
\end{Lem}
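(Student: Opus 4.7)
The plan is to exhibit an explicit bijection between the set of partitions of $1$ into $m$ parts and the set of partitions of $a$ into $m$ parts, induced by scalar multiplication. Since $a \in \mathbb{F}_q^*$, the map $x \mapsto ax$ is a bijection on $\mathbb{F}_q^*$ (with inverse $x \mapsto a^{-1}x$), so it makes sense to scale each part of a partition by $a$ and stay within $\mathbb{F}_q^*$.

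Concretely, I would define $\phi_a$ sending the multiset $\{x_1, x_2, \ldots, x_m\}$ of elements of $\mathbb{F}_q^*$ to the multiset $\{ax_1, ax_2, \ldots, ax_m\}$. First, each $ax_i$ lies in $\mathbb{F}_q^*$ because $a \neq 0$ and $x_i \neq 0$. Second, if $x_1 + x_2 + \cdots + x_m = 1$, then $ax_1 + ax_2 + \cdots + ax_m = a \cdot 1 = a$, so $\phi_a$ indeed sends an $m$-part partition of $1$ to an $m$-part partition of $a$. The multiset structure is preserved because equal entries remain equal after multiplication by $a$, and distinct entries remain distinct (again using that multiplication by $a$ is injective on $\mathbb{F}_q$).

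The inverse map is clearly $\phi_{a^{-1}}$, defined analogously by scaling each part by $a^{-1}$, which is a well-defined element of $\mathbb{F}_q^*$. One checks directly that $\phi_{a^{-1}} \circ \phi_a$ and $\phi_a \circ \phi_{a^{-1}}$ are the identities on the respective sets of partitions. Hence $\phi_a$ is a bijection, which yields $\tilde{P}_m(1) = \tilde{P}_m(a)$.

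There is really no obstacle here, since the argument is a clean scaling bijection that works uniformly for all $a \in \mathbb{F}_q^*$ and all positive $m$. The only points to verify are that scaling preserves the conditions of being nonzero and of summing correctly, both of which are immediate from the field axioms.
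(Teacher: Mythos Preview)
Your proof is correct and follows essentially the same approach as the paper: both use the scaling bijection $\{x_1,\ldots,x_m\}\mapsto\{ax_1,\ldots,ax_m\}$ on multisets of nonzero elements. Your version is slightly more detailed in verifying injectivity via the explicit inverse $\phi_{a^{-1}}$, but the idea is identical.
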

\begin{proof} Let $x_1+x_2+\cdots +x_m=1$. The following mapping between two multisets defined by $$\{x_1,x_2,\ldots
,x_m\}\mapsto \{ax_1,ax_2,\ldots ,ax_m\}$$ for some $a\in \fq^*$ is
one-to-one and onto, which results in $ax_1+ax_2+\ldots +ax_m=a$.
Thus $\tilde{P}_m(a)=\tilde{P}_m(1)$.
\end{proof}

It is obvious to see that $\tilde{P}_1(z)=1$ if $z\in \fq^*$ and
$\tilde{P}_1(0)=0$. However, we can show that
$\tilde{P}_m(0)=\tilde{P}_m(z)$ if $m\not\equiv 0~(\bmod ~p)$ and
$m\not\equiv 1 ~(\bmod~ p)$ as follows.

\begin{Lem} \label{equality}
Let  $m$  be any positive integer satisfying $m\not\equiv 0~(\bmod ~p)$
and $m\not\equiv 1 ~(\bmod~ p)$.  Then
$\tilde{P}_m(0)=\tilde{P}_m(1)$.
\end{Lem}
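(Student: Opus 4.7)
My plan is to reduce the equality $\tilde{P}_m(0) = \tilde{P}_m(1)$ to the corresponding equality for multisets that are allowed to contain zero, where an easy shifting bijection applies. Concretely, I would work with $\hat{P}_k(z) = M(k, z, \fq)$, the number of $k$-multisets of $\fq$ summing to $z$ (equivalently, partitions of $z$ into at most $k$ nonzero parts, by padding with zeros), and first establish the telescoping identity
$$\tilde{P}_m(z) = \hat{P}_m(z) - \hat{P}_{m-1}(z).$$
This follows by splitting the $m$-multisets of $\fq$ with sum $z$ according to whether $0$ appears as a part: removing one copy of $0$ gives a bijection onto $(m-1)$-multisets of $\fq$ summing to $z$, while multisets avoiding $0$ altogether are exactly the objects counted by $\tilde{P}_m(z)$.

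The key auxiliary fact is then that $\hat{P}_k(0) = \hat{P}_k(1)$ whenever $p \nmid k$. For this I would use the shift map $\{x_1, \ldots, x_k\} \mapsto \{x_1 + c, \ldots, x_k + c\}$ with $c = 1/k \in \fq$; this is a well-defined bijection on $k$-multisets of $\fq$ (its inverse is the shift by $-c$), and it changes the sum by $kc = 1$, so it carries multisets with sum $0$ bijectively onto multisets with sum $1$.

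Assembling the pieces: under the hypothesis $m \not\equiv 0, 1 \pmod p$, both $m$ and $m-1$ are invertible in $\fq$, so the shifting step gives $\hat{P}_m(0) = \hat{P}_m(1)$ and $\hat{P}_{m-1}(0) = \hat{P}_{m-1}(1)$; subtracting and applying the telescoping identity above yields the desired equality. The main conceptual point, rather than a genuine obstacle, is recognizing that the shift bijection cannot be applied directly to the nonzero-part count $\tilde{P}_m$ (a part equal to $-c$ would become $0$ after shifting); it must be applied one level up at $\hat{P}_m$, and the restriction to nonzero parts is then recovered by the inclusion-exclusion formula $\tilde{P}_m = \hat{P}_m - \hat{P}_{m-1}$.
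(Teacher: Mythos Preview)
Your argument is correct, and it is genuinely different from the paper's. The paper shifts every part by $+1$, which turns a partition of $0$ into $m$ nonzero parts into a partition of $m$ into at most $m$ parts with no part equal to $1$; it then stratifies these by the number of zero parts, obtains a telescoping sum of terms $\tilde{P}_{m-i}(m)-\tilde{P}_{m-i-1}(m-1)$, and invokes Lemma~\ref{lemma 1} (which needs $m,m-1\in\fq^*$) to collapse it to $\tilde{P}_m(m)=\tilde{P}_m(1)$. Your route instead passes to the unrestricted count $\hat{P}_k$, applies the shift by $k^{-1}$ to move the sum from $0$ to $1$ in one step, and then recovers $\tilde{P}_m$ via $\tilde{P}_m=\hat{P}_m-\hat{P}_{m-1}$. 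This is shorter and makes the role of the hypotheses transparent: the condition $p\nmid m$ is exactly what makes the shift work at level $m$, and $p\nmid m-1$ at level $m-1$. The paper's proof, by contrast, stays entirely within the $\tilde{P}$ framework and uses only the constant shift $+1$, at the cost of a longer combinatorial decomposition.
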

\begin{proof}  Let  $x_1+x_2+\cdots +x_m=0$ be a partition of $0$ into $m$ parts. Then $(x_1+1)+(x_2+1)+\cdots +(x_m+1)=m$ is a  partition of $m\in \fq^*$ with at most $m$ parts (if $x_j=p-1$ then $x_j+1=0$), but since $x_j\neq 0$ there is no $x_j+1=1$.  Moreover, there is a bijective correspondence of multisets $\{x_1,\ldots ,x_m\}\mapsto \{x_1+1,\ldots ,x_m+1\}$. Therefore, in order to find the number $\tilde{P}_m(0)$ of partitions of $0$ into $m$ parts over $\fq$,  we need to find the number of partitions of $m$ with at most $m$ parts  but no element is equal to $1$.  This means these partitions of $m$ can have parts equal to the zero.

Let $x_1+x_2+\cdots +x_m=m$. We  assume that the parts equal to 1
(if any) appear in the beginning of the list: $x_1,x_2,\ldots,x_m$.
If $x_1=1$ then $x_1+ x_2 + \cdots +x_m=m$ implies $x_2+\cdots +x_m=m-1$.
Conversely, each partition of $m$ into $m-1$ parts can generate a
partition of $m$ into $m$ parts with the first part equal to $1$. So
the number of partitions of $m$ into $m$ parts with at least one
part equal to 1 is equal to the number of partitions of $m-1$ into
$m-1$ parts.  Let $U_0$ be the family of partitions of $m$ into $m$
parts without zero elements and no part is equal to 
$1$.  Therefore $|U_0|=\tilde{P}_m(m)-\tilde{P}_{m-1}(m-1)$.

\par Let $U_1$ be the family of partitions of $m$ with $m$ parts with
exactly one element equal to $0$ and no element equal to $1$.  Let
$x_1+x_2+\cdots +x_m=m$ be a partition in $U_1$ and  $x_1=0$ and
$x_j \neq 0, 1$ for  $j=2, \ldots, m$. Obviously,  it is equivalent to  a
partition  $x_2+\cdots +x_m=m$ of $m$ into $m-1$ parts with all
parts not equal to $1$. Similarly as in the case for $U_0$ we have
$|U_1|=\tilde{P}_{m-1}(m)-\tilde{P}_{m-2}(m-1)$.

More generally, let $U_i$ be the family of partitions with $m$ parts
with $i$ parts equal to the zero, say $x_1=x_2=\ldots =x_i=0$, and
$x_{j} \neq 0, 1$ for $j=i+1, \ldots, m$. Then we have a partition of
$m$ into $m-i$ parts,  $x_{i+1}+\cdots +x_m=m$, such that no part is
equal to $1$. Similarly, we have
$|U_i|=\tilde{P}_{m-i}(m)-\tilde{P}_{m-i-1}(m-1)$. In particular,
for $i=m-1$ there is only one solution of the equation $x_m=m$ and
thus $|U_{m-1}|= \tilde{P}_1(m)=1$.

We note that these families of $U_i$'s are pairwise disjoint and  their
union is the family  of partitions of $m$  into $m$ parts with no
part equal to $1$. Therefore we have
$\tilde{P}_m(0)=|U_0|+|U_1|+\cdots +|U_{m-1}|=
(\tilde{P}_m(m)-\tilde{P}_{m-1}(m-1))+(\tilde{P}_{m-1}(m)-\tilde{P}_{m-2}(m-1))+\cdots
+(\tilde{P}_2(m)-\tilde{P}_1(m-1))+\tilde{P}_1(m)$.

If  $m\not\equiv 0~(\bmod ~p)$ and $m\not\equiv 1 ~(\bmod~ p)$, then
$m-1$ and $m$ are both nonzero elements in $\fq$. By
Lemma~\ref{lemma 1}, we can cancel
$\tilde{P}_{i}(m-1)=\tilde{P}_{i}(m)$ for $i=1, \ldots, m-1$. Hence
$\tilde{P}_m(0) = \tilde{P}_m(m) = \tilde{P}_m(1)$.
\end{proof}

Using the above two lemmas, we obtain the exact counts of
$\tilde{P}_m(z)$ when  $m\not\equiv 0~(\bmod ~p)$ and $m\not\equiv 1
~(\bmod~ p)$.

\begin{Lem}\label{equality1} If $z\in \fq $ and $m$ is any positive integer satisfying  $m\not\equiv 0~(\bmod ~p)$ and
$m\not\equiv 1 ~(\bmod~ p)$ then  we have
$$\tilde{P}_m(z) = \frac{1}{q}\binom{q+m-2}{m}.$$
\end{Lem}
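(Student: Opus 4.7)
The plan is to exploit the uniformity of $\tilde{P}_m(z)$ in $z$ that has already been established. By Lemma~\ref{lemma 1}, $\tilde{P}_m(a) = \tilde{P}_m(1)$ for every $a \in \fq^*$, and by Lemma~\ref{equality}, under the hypothesis $m \not\equiv 0, 1 \pmod{p}$ we also have $\tilde{P}_m(0) = \tilde{P}_m(1)$. Thus $\tilde{P}_m(z)$ takes the same value for every $z \in \fq$, and the problem reduces to computing this common value.

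To pin down the common value, I would sum $\tilde{P}_m(z)$ over all $z \in \fq$. On one hand, this sum equals $q \cdot \tilde{P}_m(1)$ by the uniformity above. On the other hand, the sum counts all multisets of size $m$ drawn from $\fq^*$, grouped according to the value of their sum in $\fq$. The total number of such multisets is the standard multiset coefficient
$$\binom{|\fq^*| + m - 1}{m} = \binom{q + m - 2}{m}.$$
Equating the two expressions gives $\tilde{P}_m(z) = \frac{1}{q}\binom{q+m-2}{m}$, as desired.

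There is essentially no obstacle here once the previous two lemmas are in hand — the key insight (that the uniformity across $z$ lets us average) has already been supplied. The only thing to be careful about is simply identifying the total multiset count, which is a textbook multiset/combinations-with-repetitions formula applied to the alphabet $\fq^*$ of size $q-1$.
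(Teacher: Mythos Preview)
Your proof is correct and follows essentially the same approach as the paper: use Lemmas~\ref{lemma 1} and~\ref{equality} to conclude that $\tilde{P}_m(z)$ is constant in $z$, then determine that constant by summing over all $z\in\fq$ and recognizing the total as the multiset coefficient $\binom{(q-1)+m-1}{m}=\binom{q+m-2}{m}$.
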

\begin{proof}
We note that there are  $\binom{(q-1)+m-1}{m}$ multisets of $m$
nonzero elements from $\fq $ in total and the sum of elements in
each multiset can be any element in $\fq$. Using Lemmas~\ref{lemma
1} and \ref{equality} we have $$\sum_{s\in
\fq}\tilde{P}_m(s)=q\tilde{P}_m(1)=\binom{(q-1)+m-1}{m}$$ and
therefore
$$\tilde{P}_m(z)=\tilde{P}_m(1)=\frac{1}{q}\binom{q+m-2}{m}$$
for every $z\in \fq $.
\end{proof}

In order to consider other cases, we use an interesting result
by Li and Wan \cite{LiWan},  which gives the number $N(k,b,\fq^*)$ of sets with
(all distinct) $k$ nonzero elements that sums to  $b\in \fq$. Namely,
\begin{equation}\label{N}
N(k,b,\fq^*)=\frac{1}{q}\binom{q-1}{k}+(-1)^{k+\lfloor
k/p\rfloor}\frac{\nu (b)}{q}\binom{q/p-1}{\lfloor k/p \rfloor},
\end{equation}
where $\nu(b)=-1$ if $b\neq 0$ and $\nu(b)=q-1$ if $b=0$ (see
Theorem 1.2 in \cite{LiWan}).

First we can prove
\begin{Lem}\label{inclusion-exclusion}
Let $N(k,b,\fq^*)$ be  the number of sets with $k$ nonzero elements
that sums to $b\in \fq$ and $m > 1$ be a positive integer.
Then
\begin{eqnarray*}
\tilde{P}_m(0) &=& \left((q-1)N(1,1, \fq^*)\tilde{P}_{m-1}(1)+N(1,0,\fq^*)\tilde{P}_{m-1}(0) \right) \\
&&-\left((q-1)N(2,1,\fq^*)\tilde{P}_{m-2}(1)+N(2,0,\fq^*)\tilde{P}_{m-2}(0)\right)\\
&&+\ldots \\
&&+(-1)^{m-1} \left((q-1)N(m-2, 1,\fq^*)\tilde{P}_2(1)+N(m-2, 0, \fq^*)\tilde{P}_{2}(0)\right) \\
&&+(-1)^m  (q-1)N(m-1,1,\fq^*) +(-1)^{m+1}N(m,0,\fq^*).
\end{eqnarray*}
\end{Lem}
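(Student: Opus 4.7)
The plan is to prove the identity by a double-counting argument wrapped around the inclusion-exclusion identity $\sum_{k=1}^{t}(-1)^{k-1}\binom{t}{k}=1$, valid for every $t\geq 1$. The right-hand side will be interpreted as an alternating sum over pairs consisting of a distinct $k$-subset and a residual multiset; each partition of $0$ into $m$ parts will then be shown to contribute exactly $1$.

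First I would, for each $k$ with $1\leq k\leq m$, consider ordered pairs $(S,M)$ where $S$ is a $k$-subset of $\fq^*$ with $\mathrm{sum}(S)=b$ and $M$ is a multiset of size $m-k$ from $\fq^*$ with $\mathrm{sum}(M)=-b$. The multiset union $S\sqcup M$ then has size $m$ and sums to $0$, so it is a partition of $0$ into $m$ parts. Counting these pairs at fixed $k$ gives $\sum_{b\in\fq} N(k,b,\fq^*)\tilde{P}_{m-k}(-b)$. By Lemma~\ref{lemma 1}, $\tilde{P}_{m-k}(-b)=\tilde{P}_{m-k}(1)$ for every $b\in\fq^*$, and the scaling bijection $\{x_1,\ldots,x_k\}\mapsto\{ax_1,\ldots,ax_k\}$ shows $N(k,b,\fq^*)=N(k,1,\fq^*)$ for all $b\in\fq^*$ (this is also visible from~(\ref{N})). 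Hence the count simplifies to $(q-1)N(k,1,\fq^*)\tilde{P}_{m-k}(1)+N(k,0,\fq^*)\tilde{P}_{m-k}(0)$, matching the bracketed expression in the statement.

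Next I would compute, for a fixed partition $P$ of $0$ into $m$ parts, how many pairs $(S,M)$ with $|S|=k$ produce $P=S\sqcup M$. If the support of $P$ (its set of distinct values) has size $t\geq 1$, then $S$ must be a $k$-subset of that support and $M=P\setminus S$ is forced; so there are exactly $\binom{t}{k}$ such pairs, independent of the multiplicities. Taking the alternating sum over $k$ gives $\sum_{k=1}^{t}(-1)^{k-1}\binom{t}{k}=1-(1-1)^{t}=1$, so the alternating sum of pair-counts equals the number of partitions of $0$ into $m$ parts, which is $\tilde{P}_m(0)$. This proves the identity, with the boundary terms $k=m-1$ and $k=m$ collapsing as stated because $\tilde{P}_1(0)=0$ kills the $N(m-1,0)\tilde{P}_1(0)$ contribution and the empty-multiset conventions $\tilde{P}_0(0)=1$, $\tilde{P}_0(1)=0$ leave only the $N(m,0,\fq^*)$ term at $k=m$.

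The only subtle point I anticipate is the bookkeeping for the signs and the endpoint terms: one has to check that the $k=m-1$ and $k=m$ rows of the general expression reduce exactly to the two final summands $(-1)^{m}(q-1)N(m-1,1,\fq^*)$ and $(-1)^{m+1}N(m,0,\fq^*)$ displayed in the lemma. Everything else is a clean consequence of (i) the Lemma~\ref{lemma 1} reduction of $\tilde{P}_{m-k}(-b)$ to $\tilde{P}_{m-k}(1)$, (ii) the $\fq^*$-scaling invariance of $N(k,\cdot,\fq^*)$ on $\fq^*$, and (iii) the binomial identity that makes every partition contribute weight $1$.
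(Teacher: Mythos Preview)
Your argument is correct and is essentially the same inclusion--exclusion proof the paper gives: the paper writes $\mathcal{U}=\bigcup_{a\in\fq^*}\mathcal{B}_a$ and applies inclusion--exclusion over the $k$-fold intersections $\mathcal{B}_{a_1}\cap\cdots\cap\mathcal{B}_{a_k}$, which amounts exactly to your count of pairs $(S,M)$ with $|S|=k$, and your per-partition weight computation $\sum_{k=1}^t(-1)^{k-1}\binom{t}{k}=1$ is just the inclusion--exclusion identity made explicit. The only cosmetic difference is that you spell out why each partition contributes $1$, whereas the paper simply invokes the principle; the treatment of the endpoint terms $k=m-1,m$ and the reduction via $N(k,b,\fq^*)=N(k,1,\fq^*)$ and Lemma~\ref{lemma 1} match the paper's.
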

\begin{proof}
Denote by $\mathcal{U}$  the family of all multisets of $m$ nonzero
elements that sums to the zero, i.e. $\tilde{P}_m(0)=|\mathcal{U}|$.
Let $\mathcal{B}_a$ be the family of all multisets of $m$  nonzero
elements
 such that $a$ is a member of each multiset and the sum of elements of each multiset equal to $0$. Namely, $B_a\in \mathcal{B}_a$ implies $\sum_{s\in B_a}s=0$ and $a\in B_a$.  Obviously,  $\mathcal{U}=\bigcup_{a\in \fq^*}\mathcal{B}_a$.

\par Now we will
use the principle of inclusion-exclusion to find the cardinality of
$\mathcal{U}$. For distinct $a_1, \ldots, a_k \in \fq^*$ and $k >
m$, it is easy to see that
$$\mathcal{B}_{a_1}\cap\mathcal{B}_{a_2}\cap\ldots
\cap\mathcal{B}_{a_k}=\emptyset, $$ because each multiset $B_{a_1}$
contains only $m$ nonzero elements.  Moreover, if $k=m$ then  the
number of multisets in the union of intersections is $N(m,0,\fq^*)$.

 If $B\in
\mathcal{B}_{a_1}\cap\mathcal{B}_{a_2}\cap\ldots
\cap\mathcal{B}_{a_k}$ and $k \leq m -1$ then
$$B=\{a_1,a_2,\ldots ,a_k, x_{k+1},\ldots ,x_m\}.$$
Because $x_{k+1}+\cdots + x_m=-(a_1+\cdots +a_k)$, the number of
elements in the intersection
$\mathcal{B}_{a_1}\cap\mathcal{B}_{a_2}\cap\ldots
\cap\mathcal{B}_{a_k}$ is the  same as the number of partitions of
$-(a_1+\cdots +a_k)$ into $m-k$  parts, i.e.
$$|\mathcal{B}_{a_1}\cap\mathcal{B}_{a_2}\cap\ldots
\cap\mathcal{B}_{a_k}|=\tilde{P}_{m-k}(-a_1-\cdots -a_k).$$ We note
that  none of $a_i$'s ($i=1, \ldots, k$) is equal to zero and $N(k,
b,\fq^*) = N(k, 1, \fq^*)$ for any $b\in \fq^*$. In particular,  if
$k < m-1$, then  the sum $a_1 + \cdots + a_{m-1}$ can be any element
in $\fq$ and thus  there are
$(q-1)N(k,1,\fq^*)\tilde{P}_{m-k}(1)+N(k,0,\fq^*)\tilde{P}_{m-k}(0)$
such multisets  $B\in
\mathcal{B}_{a_1}\cap\mathcal{B}_{a_2}\cap\ldots
\cap\mathcal{B}_{a_k}$ for all choices of nonzero distinct $a_1, \ldots,
a_k$.

If $k=m-1$ then the sum $a_1 + \cdots + a_{m-1}$ can not be equal to
the zero, there are in total $(q-1)N(m-1,1,\fq^*)$ such
multisets contained in the intersection of $m-1$ families of
$\mathcal{B}_{a_i}$'s.

Finally we combine the above cases and use the principle of
inclusion-exclusion to complete the proof.
\end{proof}

In the sequel we also need the following result.
\begin{Lem} \label{Mc Claren}
For all  positive integers $s$, we have 
$$\sum_{j=1}^s(-1)^{j+1}\binom{q-1}{j}\binom{q-2+s-j}{s-j}=\binom{q-2+s}{s}$$
\end{Lem}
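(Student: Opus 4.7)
The plan is to prove this identity by a short generating-function argument. First I would recall two standard formal power series identities: by the negative binomial expansion,
$$(1-x)^{-(q-1)}=\sum_{k\geq 0}\binom{q-2+k}{k}x^{k},$$
so that $\binom{q-2+s-j}{s-j}$ is the coefficient of $x^{s-j}$ in $(1-x)^{-(q-1)}$; and by the ordinary binomial theorem,
$$(1-x)^{q-1}=\sum_{j\geq 0}(-1)^{j}\binom{q-1}{j}x^{j},$$
so $(-1)^{j}\binom{q-1}{j}$ is the coefficient of $x^{j}$ in $(1-x)^{q-1}$.

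Next I would multiply the two series. Since $(1-x)^{q-1}\cdot(1-x)^{-(q-1)}=1$, extracting the coefficient of $x^{s}$ on both sides for $s\geq 1$ and writing the left side as a Cauchy product gives
$$\sum_{j=0}^{s}(-1)^{j}\binom{q-1}{j}\binom{q-2+s-j}{s-j}=0.$$
Isolating the $j=0$ term, which is $\binom{q-2+s}{s}$, and moving it to the other side (thus flipping the signs of the remaining terms, converting $(-1)^{j}$ into $(-1)^{j+1}$) yields exactly
$$\sum_{j=1}^{s}(-1)^{j+1}\binom{q-1}{j}\binom{q-2+s-j}{s-j}=\binom{q-2+s}{s},$$
which is the desired identity.

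There is essentially no real obstacle here: once the two generating functions are recognized, the proof is a single coefficient extraction from the trivial identity $(1-x)^{q-1}(1-x)^{-(q-1)}=1$. A purely combinatorial alternative would be an induction on $s$ using Pascal's rule $\binom{q-2+s-j}{s-j}=\binom{q-3+s-j}{s-j}+\binom{q-3+s-j}{s-j-1}$, but that route is more tedious and offers no additional insight. The only small point to check is the edge case $s\geq 1$, which is what makes the right-hand side of the product equal $0$; this is immediate since the constant series $1$ has vanishing coefficient at every positive power of $x$.
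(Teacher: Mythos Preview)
Your proof is correct and is essentially the same generating-function argument as the paper's: the paper multiplies $(1+x)^{q-1}$ by its reciprocal, while you use $(1-x)^{q-1}$ and its reciprocal, which places the sign on the other factor and spares you the final multiplication by $(-1)^s$. The two proofs are otherwise identical.
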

\begin{proof}  Multiplying
$(1+x)^{q-1}=\sum_{k=0}^{q-1}\binom{q-1}{j}x^j$ and 
series
$$\frac{1}{(1+x)^{q-1}}=\sum_{k=0}^\infty \binom{q-2+k}{k}(-1)^k
x^k,$$ We obtain 
$$1=(1+x)^{q-1}\cdot\frac{1}{(1+x)^{q-1}}=\Bigl(\sum_{k=0}^{q-1}\binom{q-1}{j}x^j\Bigr)\Bigl(\sum_{k=0}^\infty \binom{q-2+k}{k}(-1)^k
x^k\Bigr)=$$
$$\sum_{s=0}^\infty
\Bigl(\sum_{j=0}^s(-1)^{s-j}\binom{q-1}{j}\binom{q-2+s-j}{s-j}\Bigr)x^s.$$
Therefore for $s\geq 1$ we have
$\sum_{j=0}^s(-1)^{s-j}\binom{q-1}{j}\binom{q-2+s-j}{s-j}=0$. This
implies
$$\sum_{j=1}^s(-1)^{s-j+1}\binom{q-1}{j}\binom{q-2+s-j}{s-j}=(-1)^s\binom{q-2+s}{s}.$$
Finally multiplying both sides of the last equality by $(-1)^s$  we
complete the proof.
\end{proof}
\par Next we prove Theorem~\ref{partitions}. In order to do so,
we let
\begin{equation} \label{relation}
\tilde{P}_m(z)=\frac{1}{q}\binom{q-2+m}{m}+D_m(z).
\end{equation}

We assume $q>2$. Obviously,  by Lemma~\ref{equality1}, we have
 $D_m(z)=0$ for any $z\in \fq$ if $m\not\equiv 0~(\bmod ~p)$ and
$m\not\equiv 1~(\bmod ~p)$. Further $D_m(z)=D_m(1)$ by
Lemma~\ref{lemma 1} for all $z\neq 0$.  Because
$\tilde{P}_m(0)+(q-1)\tilde{P}_m(1)=\binom{q-2+m}{m}$,  we have
\begin{equation}\label{Dequality}
D_m(0)+(q-1)D_m(1)=0, ~\text{i.e.,}\quad
D_m(1)=-\frac{1}{q-1}D_m(0).\end{equation}

Next we use convention that $\tilde{P}_0(0)=1$ and $\tilde{P}_0(1)=0$ so that $D_0(0)=\frac{q-1}{q}$ and $D_0(1)=-\frac{1}{q}$. Similarly,   $\tilde{P}_1(0)=0$ and  $\tilde{P}_1(1)=1$
and thus $D_1(0)=-\frac{q-1}{q}$ and $D_1(1)=\frac{1}{q}$. For the
rest of this section, we only need to compute  $D_m(0)$ when $m= jp$
or $m=jp+1$ for some positive integer $j$ because of Equation~(\ref{Dequality}). To do this, we apply
Lemmas~\ref{inclusion-exclusion} and \ref{Mc Claren}, along with
Equations~(\ref{N}) (\ref{relation}), and the following equation
\begin{equation}\label{Nsum}
(q-1)N(m,1,\fq^*)+N(m,0,\fq^*)=\binom{q-1}{m}.\end{equation}

Let us consider  $m=up$ first. In this case, by
Lemma~\ref{inclusion-exclusion} and Equation~(\ref{relation}), we have:
\begin{eqnarray*}
\tilde{P}_m(0)&=&\sum_{s=1}^{m-2}(-1)^{s+1}\left[\frac{1}{q}\binom{q-2+m-s}{m-s}\left((q-1)N(s,1,\fq^*)
+N(s, 0,\fq^*)\right) \right.\\
&& \left. +(q-1)N(s, 1,\fq^*)D_{m-s}(1)+N(s, 0, \fq^*)D_{m-s}(0) \right]\\
&&+(-1)^m (q-1)N(m-1,1,\fq^*)+(-1)^{m+1}N(m,0,\fq^*).
\end{eqnarray*}

Using Equations~(\ref{Nsum}) and (\ref{N}), we obtain
\begin{eqnarray*}
\tilde{P}_m(0)&=& \frac{1}{q}\sum_{s=1}^{m-2}(-1)^{s+1}\binom{q-1}{s}\binom{q-2+m-s}{m-s} \\
&&+\sum_{s=1}^{m-2}(-1)^{s+1}\frac{1}{q}\binom{q-1}{s} \left((q-1)D_{m-s}(1)+D_{m-s}(0)\right)\\
&&+ \sum_{s=1}^{m-2}(-1)^{s+1} (q-1)(-1)^{s+\lfloor s/p\rfloor
}\frac{1}{q}\binom{q/p-1}{\lfloor
s/p\rfloor} \left(-D_{m-s}(1)+D_{m-s}(0) \right) \\
&&+(-1)^m(q-1)\frac{1}{q}\binom{q-1}{m-1}+(-1)^{m+1}\frac{1}{q}\binom{q-1}{m}\\
&&+(-1)^m(q-1)(-1)^{m-1+\lfloor
(m-1)/p\rfloor}\frac{-1}{q}\binom{q/p-1}{\lfloor
(m-1)/p\rfloor}\\
&&+(-1)^{m+1}(-1)^{m+\lfloor m/p\rfloor}\frac{q-1}{q}
\binom{q/p-1}{\lfloor m/p\rfloor}
\end{eqnarray*}

After rearranging terms,  we  use Lemma~\ref{Mc Claren}, Lemma~\ref{equality1},    Equations~(\ref{relation}) and (\ref{Dequality}) to simplify the above as
follows:
\begin{eqnarray*}
&=& \frac{1}{q}\sum_{s=1}^{m}(-1)^{s+1}\binom{q-1}{s}\binom{q-2+m-s}{m-s}\\
&&+ \sum_{1\leq s\leq m-2 \atop s\equiv 0,1
(\bmod{p})}(-1)^{s+1} (-1)^{s+\lfloor s/p\rfloor
}\binom{q/p-1}{\lfloor s/p\rfloor}D_{m-s}(0)\\
&&+(-1)^{u-1}\frac{q-1}{q}\bigl[\binom{q/p-1}{u-1}+\binom{q/p-1}{u}\bigr]\\
&=&\frac{1}{q}\binom{q-2+up}{up} +\sum_{1\leq s\leq up \atop
s\equiv 0,1(\bmod{p})}(-1)^{1+\lfloor s/p
\rfloor }\binom{q/p-1}{\lfloor s/p\rfloor}D_{up-s}(0),\\
\end{eqnarray*}
 where we use Lemma~\ref{Mc Claren} and  $-D_0(0)=D_1(0)=-\frac{q-1}{q}$ to obtain the last equality.  Now let us rewrite this as
\begin{equation}
\label{eq1} \tilde{P}_{up}(0)=\frac{1}{q}\binom{q-2+up}{up}
+\sum_{t=0}^{u-1}(-1)^{1+(u-t)}\binom{q/p-1}{u-t}D_{tp}(0)
+\sum_{t=0}^{u-1}(-1)^{(u-t)}\binom{q/p-1}{u-t-1}D_{tp+1}(0).
\end{equation}

Similarly, for $m=up+1$, we have
\begin{eqnarray*}
&&\tilde{P}_{up+1}(0)\\
&=&\frac{1}{q}\binom{q-2+(up+1)}{up+1}+ \sum_{1\leq s\leq up-1 \atop
s\equiv 0,1(\bmod{p})}(-1)^{1+\lfloor s/p
\rfloor }\binom{q/p-1}{\lfloor s/p\rfloor}D_{up+1-s}(0) \\
&=&
\frac{1}{q}\binom{q-2+(up+1)}{up+1}+\sum_{t=1}^{u-1}(-1)^{1+u-t}\binom{q/p-1}{u-t}
\left( D_{tp}(0) + D_{tp+1}(0) \right) -D_{up}(0).
\end{eqnarray*}

Next we show $D_{up+1}(0)=-D_{up}(0)$ for all $u \geq 0$ by the
 mathematical induction. The base case $u=0$ holds because $D_1(0) = D_0(0) = -\frac{q-1}{q}$.  Assume now $-D_{sp}(0)=D_{sp+1}(0)$ for all $0\leq s< u$ and plug
into  the above formula  we obtain
$$\tilde{P}_{up+1}(0)=\frac{1}{q}\binom{q-2+up+1}{up+1}-D_{up}(0)$$
Because $\tilde{P}_{up+1}(0)=\frac{1}{q}\binom{q-2+up+1}{up+1}+
D_{up+1}(0)$, we conclude that $D_{up+1}(0)=-D_{up}(0)$. Hence it is true for all $u \geq 0$.
Using this relation we simplify Equation~(\ref{eq1}) to
\begin{eqnarray}
\tilde{P}_{up}(0) &=&
\frac{1}{q}\binom{q-2+up}{up}+\sum_{t=0}^{u-1}(-1)^{u-t+1}
\left(\binom{q/p-1}{u-t}+\binom{q/p-1}{u-t-1}\right)D_{tp}(0) \nonumber \\
&=&
\frac{1}{q}\binom{q-2+up}{up}+\sum_{t=0}^{u-1}(-1)^{u-t+1}\binom{q/p}{u-t}D_{tp}(0)
\end{eqnarray}
and by using
$\tilde{P}_{up}(0)=\frac{1}{q}\binom{q-2+up}{up}+D_{up}(0)$ we
obtain
\begin{equation}\label{recursionD}D_{up}(0)=\sum_{t=0}^{u-1}(-1)^{u-t+1}\binom{q/p}{u-t}D_{tp}(0).\end{equation}
Let $f(x)=\sum_{j=0}^\infty D_{jp}(0)x^j$ be the generating function
of the sequence $\{D_{up}(0) : u =0,1 , 2, \ldots\}$. Then
\begin{eqnarray*}
(1-x)^{q/p}f(x)&=&\left(\sum_{l=0}^{q/p}\binom{q/p}{l}(-1)^lx^l
\right) \left(\sum_{j=0}^\infty
D_{jp}(0)x^j \right)\\
&=&D_0(0)+\sum_{u=1}^\infty
\left( \left(\sum_{t=0}^{u-1}\binom{q/p}{u-t}(-1)^{u-t}D_{tp}(0) \right)+D_{up}(0) \right)x^u \\
&=& D_0(0)+\sum_{u=1}^\infty
(-D_{up}(0)+D_{up}(0))x^u=D_0(0)=\frac{q-1}{q}.
\end{eqnarray*}
Now $(1-x)^{q/p}f(x)=\frac{q-1}{q}$ implies
$$f(x)=\frac{q-1}{q}\frac{1}{(1-x)^{q/p}}=\frac{q-1}{q}\sum_{t=0}^\infty
\binom{q/p-1+t}{t}x^t.$$

Hence  $D_{jp}(0)=\frac{q-1}{q}\binom{q/p-1+j}{j}$ for
$j=0,1,2\ldots$. Moreover, we use Equation~(\ref{Dequality}) and $D_{jp+1}(0) = -D_{jp}(0)$ to conclude
$$D_{jp}(0)=\frac{q-1}{q}\binom{q/p-1+j}{j};\qquad
D_{jp}(1)=-\frac{1}{q}\binom{q/p-1+j}{j};$$
$$D_{jp+1}(0)=-\frac{q-1}{q}\binom{q/p-1+j}{j};\qquad
D_{jp+1}(1)=\frac{1}{q}\binom{q/p-1+j}{j}.$$  Finally, together with
Lemma~\ref{equality1} we complete the proof of Theorem~\ref{partitions}.

Finally we note that it is straightforward to derive the following corollary.

\begin{Cor}
Let $m$ be a non-negative integer,  $\fq$ be a finite field of $q=p^r$ elements with prime $p$, and $z\in \fq$.  The number of
partitions of $z$ into at most $m$ parts over $\fq$ is given by
$$\hat{P}_m(z)=\sum_{k=0}^m\tilde{P}_k(z)=\frac{1}{q}\binom{q-1+m}{m}+\tilde{D}_m(z),$$
where
\begin{eqnarray*}
\tilde{D}_m(z) &= \left\{
\begin{array}{ll}
 D_m(z), & \mbox{if} ~ m \equiv 0 ~(\bmod~ p);\\
0,    & otherwise.
\end{array}
\right.
\end{eqnarray*}
\end{Cor}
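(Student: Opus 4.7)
The plan is to sum the formula of Theorem~\ref{partitions} term by term and exploit the pairwise cancellation of the correction terms $D_k(z)$. Writing $\tilde{P}_k(z) = \frac{1}{q}\binom{q-2+k}{k} + D_k(z)$ and summing over $k=0,\ldots,m$, I get
$$\hat{P}_m(z) = \frac{1}{q}\sum_{k=0}^m \binom{q-2+k}{k} + \sum_{k=0}^m D_k(z).$$
The first sum collapses by the hockey-stick identity to $\binom{q-1+m}{m}$ (a quick induction on $m$ using Pascal's rule: $\binom{q-1+m}{m} = \binom{q-2+m}{m} + \binom{q-2+m}{m-1}$), which gives the main term $\frac{1}{q}\binom{q-1+m}{m}$.

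It remains to identify $\sum_{k=0}^m D_k(z)$ with $\tilde{D}_m(z)$. The key observation is that the explicit formulas of Theorem~\ref{partitions} give the identity
$$D_{jp}(z) + D_{jp+1}(z) = 0$$
for every $z\in\fq$ and every $j\geq 0$, since both $D_{jp}(0) = \tfrac{q-1}{q}\binom{q/p-1+j}{j} = -D_{jp+1}(0)$ and $D_{jp}(z) = -\tfrac{1}{q}\binom{q/p-1+j}{j} = -D_{jp+1}(z)$ for $z\neq 0$. Moreover $D_k(z) = 0$ whenever $k\not\equiv 0,1 \pmod p$.

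A routine case split on $m \bmod p$ then finishes the argument. Write $m = jp + r$ with $0\leq r\leq p-1$. If $r \geq 2$, the partial sum pairs up $D_{ip}(z) + D_{ip+1}(z) = 0$ for $i=0,\ldots,j$ and all remaining indices contribute zero, so $\sum_{k=0}^m D_k(z) = 0$. If $r=1$, the same pairing for $i=0,\ldots,j$ again gives $0$. If $r=0$, the pairs for $i=0,\ldots,j-1$ cancel and only the single unpaired term $D_{jp}(z) = D_m(z)$ survives. This matches the piecewise definition of $\tilde{D}_m(z)$, completing the proof. The only mildly delicate point is keeping the index bookkeeping honest in the three cases; no new combinatorial identity beyond hockey-stick is required.
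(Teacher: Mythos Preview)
Your argument is correct and is precisely the ``straightforward'' derivation the paper alludes to but does not spell out: sum the formula of Theorem~\ref{partitions}, collapse the main term with the hockey-stick identity, and use $D_{jp}(z)+D_{jp+1}(z)=0$ to reduce the correction sum to a single surviving term when $m\equiv 0\pmod p$. The case bookkeeping is clean and nothing further is needed.
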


\section{Proof of Theorem~\ref{compositions}}

In this section we prove Theorem~\ref{compositions}. Obviously the
result holds trivially for $m=1$ because $S_1(0)=0$ and $S_1(z)=1$
for any $z\in \fq^*$. Moreover, when $m=2$, it is easy to see that
$S_2(0)=q-1$ and $S_2(z)=q-2$ where $z\in \fq^*$. Indeed,
$a+(p-1)a=0$ for any $a\in \fq^*$, but $a+x=z$  where $z\neq 0$ has
a nonzero solution for each $a\in\fq^*-\{z\}$.

Assume now $m \geq 3$ and   $z=x_1+x_2+\cdots +x_m$  for a fixed
$z\in \fq$. We consider $x_1+\cdots +x_{m-2}$ in the following two
cases:

(i) $x_1+\cdots +x_{m-2}=z$. In this case, there  are
$(q-1)S_{m-2}(z)$  solutions to $z=x_1+x_2+\cdots +x_m$ with all
$x_i$'s not equal to $0$  because  we can always find $x_m=-x_{m-1}$
such that $x_{m-1}+x_m=0$ for any choice of $x_{m-1}\in \fq^*$.

(ii) $x_1+\cdots +x_{m-2}\neq z$. As we can choose $x_1,\ldots,
 x_{m-2}$ in $(q-1)^{m-2}$ ways there are $(q-1)^{m-2}-S_{m-2}(z)$
 such ordered tuples. But $x_{m-1}\in \fq^*$ can not be equal to 
 $z-(x_1+x_2+\cdots +x_{m-2})\neq 0$  because this would imply $x_m=0$.
 Therefore we have only $q-2$ choices for $x_{m-1}$ and $x_m$ is
 uniquely determined by $x_m=z-(x_1+\cdots +x_{m-1})$. Therefore
 there are $\left((q-1)^{m-2}-S_{m-2}(z)\right)(q-2)$ solutions to $z=x_1+x_2+\cdots +x_m$ with all $x_i$'s not equal to $0$   in this case.

 \par Now summing up these numbers we obtain
$$S_m(z)= (q-1) S_{m-2}(z)+ \left((q-1)^{m-2}-S_{m-2}(z)\right)(q-2)=(q-1)^{m-2}(q-2)+S_{m-2}(z).$$

Therefore,  when $m=2k$,  we have
\begin{eqnarray*}
S_m(0)&=&(q-1)^{m-2}(q-2)+S_{m-2}(0)\\
      &=& (q-1)^{m-2}(q-2)+(q-1)^{m-4}(q-2)+S_{m-4}(0) \\
        &=& \ldots \\
      &=& (q-2)[(q-1)^{m-2}+(q-1)^{m-4}+\cdots +(q-1)^2]+S_2(0) \\
        &=& (q-2)\frac{(q-1)^m-1}{(q-1)^2-1}-(q-2)+(q-1) \\
        &=&(q-2)\frac{(q-1)^m-1}{(q-1)^2-1}+1\\
        &=&\frac{(q-1)^m-1+q}{q}.
\end{eqnarray*}
Similarly for $z\neq 0$ and $m=2k$, we have
$$S_m(z)=\frac{(q-1)^m-1}{q}.$$
Moreover, if $m=2k+1$ then we have
\begin{eqnarray*}
S_m(0)&=&(q-1)^{m-2}(q-2)+S_{m-2}(z)\\
&=&(q-1)^{m-2}(q-2)+(q-1)^{m-4}(q-2)+S_{m-4}(z)\\
&=& \ldots \\
&=&(q-2)[(q-1)^{m-2}+(q-1)^{m-4}+\cdots +(q-1)]+S_1(0)\\
&=&
(q-2)(q-1)\frac{(q-1)^{m-1}-1}{(q-1)^2-1}=\frac{(q-1)^m-(q-1)}{q},
\end{eqnarray*}
and similarly
$$S_m(z)=(q-2)(q-1)\frac{(q-1)^{m-1}-1}{(q-1)^2-1}+1=\frac{(q-1)^m+1}{q}$$ for $z\in
\fq^*$. This completes the proof of Theorem~\ref{compositions}.

\begin{Cor}
The number of weak $m$-composition of $z\in \fq$ is
$$\sum_{k=0}^m \binom{m}{m-k}S_k(z)=q^{m-1}$$
\end{Cor}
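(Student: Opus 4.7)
The plan is to prove the identity by double-counting weak $m$-compositions of $z$. First I would observe that a weak $m$-composition of $z$ is simply an $m$-tuple $(x_1,\ldots,x_m) \in \fq^m$ with $x_1+\cdots+x_m=z$. Since $x_1,\ldots,x_{m-1}$ may be chosen arbitrarily and $x_m$ is then forced to equal $z-(x_1+\cdots+x_{m-1})$, there are exactly $q^{m-1}$ such tuples. This establishes the right-hand side.

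For the left-hand side, I would classify the weak $m$-compositions of $z$ according to the subset $T\subseteq\{1,2,\ldots,m\}$ of coordinates where $x_i\neq 0$. If $|T|=k$, the zero coordinates are determined by the complement, and the tuple $(x_i)_{i\in T}$ is an ordered $k$-tuple of nonzero elements of $\fq$ summing to $z$, i.e.\ a composition of $z$ with $k$ parts. The number of choices of $T$ with $|T|=k$ is $\binom{m}{k}=\binom{m}{m-k}$, and for each such $T$ there are $S_k(z)$ compositions with $k$ parts. Summing over $k$ from $0$ to $m$ yields
\[
q^{m-1} \;=\; \sum_{k=0}^m \binom{m}{m-k} S_k(z),
\]
using the convention $S_0(0)=1$ (the empty sum equals $0$) and $S_0(z)=0$ for $z\in\fq^*$, which is consistent with the boundary cases in the corollary.

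There is essentially no obstacle here; the argument is a clean partition of the set of weak compositions according to the support. As a consistency check, one can verify the identity algebraically using Theorem~\ref{compositions}: for $z\neq 0$,
\[
\sum_{k=0}^m \binom{m}{k}\frac{(q-1)^k-(-1)^k}{q}
= \frac{q^m - 0^m}{q} = q^{m-1},
\]
and for $z=0$,
\[
\sum_{k=0}^m \binom{m}{k}\frac{(q-1)^k+(-1)^k(q-1)}{q}
= \frac{q^m + (q-1)\cdot 0^m}{q} = q^{m-1},
\]
by the binomial theorem applied to $((q-1)+1)^m$ and $((-1)+1)^m$. This algebraic verification could be included as an alternative proof, but the bijective argument is more transparent and does not rely on the closed forms from Theorem~\ref{compositions}.
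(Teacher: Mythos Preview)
Your proof is correct and follows essentially the same approach as the paper: both arguments count weak $m$-compositions by partitioning them according to the positions of the zero coordinates, giving $\binom{m}{m-k}S_k(z)$ tuples with exactly $m-k$ zeros. Your write-up is more explicit than the paper's in that you justify the right-hand side $q^{m-1}$ directly and add an algebraic verification via Theorem~\ref{compositions}, but the core combinatorial idea is identical.
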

\begin{proof}For each composition with $m-k$ nonzero elements, there are $\binom{m}{m-k}$ subsets of variables  $x_i$ that takes value zero and for the rest of variables we can have a composition of $z$ with $k$ parts. Thus there are $\binom{m}{m-k}S_k(z)$ solutions of the diagonal equation with $m-k$ variables equals to
the zero. Summing up these numbers we complete the proof.
\end{proof}

\begin{Cor} The number of solutions, none of $x_i$ is zero for $i=1, \ldots, n$, to the diagonal equation
$$x_1^{u_1}+x_2^{u_2}+\cdots +x_n^{u_n}=z$$ where $z\in \fq $ and
$u_1,u_2,\ldots,u_n$ are relatively prime to the $q-1$ is given by
$S_n(z)$. If $x_i$ is allowed to be zero, then the number of
solutions is $q^{m-1}$.

If  all but one exponent, say for example $u_n$, out of $u_1,
\ldots, u_n$ are relatively prime to $q-1$ and  $d= \gcd(q-1,u_n)
>1$,  then the number of the solutions of the
corresponding diagonal equation, where all $x_i\neq 0$, $i=1,\ldots, n$,  
is $dS_{n-1}(0)+(q-1-d)S_{n-1}(1)$ if there exists $u\in \fq^*$
such that $s=u^d$ and  the number is  $(q-1)S_{n-1}(1)$ otherwise.

\end{Cor}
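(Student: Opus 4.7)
The plan is to exploit the fact that for $\gcd(u,q-1)=1$ the map $x\mapsto x^u$ is a bijection of $\fq^*$ onto itself (and of $\fq$ onto itself, fixing $0$), so the diagonal equation reduces to a linear composition problem already solved by Theorem~\ref{compositions}.

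For the first assertion, I would substitute $y_i=x_i^{u_i}$ for each $i$. Because each $u_i$ is coprime to $q-1$, this substitution is a bijection on $(\fq^*)^n$, sending solutions of $x_1^{u_1}+\cdots+x_n^{u_n}=z$ with nonzero $x_i$ to solutions of $y_1+\cdots+y_n=z$ with nonzero $y_i$; the number of the latter is $S_n(z)$ by Theorem~\ref{compositions}. Allowing $x_i=0$ (then also $y_i=0$) reduces to weak $n$-compositions of $z$, whose number is $q^{n-1}$ by the preceding corollary. (I note in passing that the statement's $q^{m-1}$ should read $q^{n-1}$.)

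For the second assertion, I would again set $y_i=x_i^{u_i}$ for $i<n$, which rewrites the equation as $y_1+\cdots+y_{n-1}=z-s$ with $s=x_n^{u_n}$ and every variable in $\fq^*$. The key structural input is that, with $d=\gcd(u_n,q-1)$, the map $x_n\mapsto x_n^{u_n}$ is a $d$-to-one surjection of $\fq^*$ onto the subgroup $H$ of $d$-th powers in $\fq^*$, which has order $(q-1)/d$. Hence the total count equals
$$d\sum_{s\in H} S_{n-1}(z-s),$$
and by Lemma~\ref{lemma 1} the summand depends only on whether $z-s$ is zero or not.

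The main case split is then whether $z\in H$, equivalently whether $z=u^d$ for some $u\in\fq^*$. If it is, exactly one term (with $s=z$) contributes $d\,S_{n-1}(0)$ while the remaining $(q-1)/d-1$ terms each contribute $S_{n-1}(1)$, giving $dS_{n-1}(0)+(q-1-d)S_{n-1}(1)$. Otherwise every $s\in H$ gives $z-s\neq 0$ and the sum collapses to $(q-1)S_{n-1}(1)$. There is no deep obstacle here; the only care required is in correctly identifying $H$ as the $d$-th power subgroup, in handling the $d$-to-one multiplicity of $x_n\mapsto x_n^{u_n}$, and in reading the stray ``$s=u^d$'' in the statement as a condition on $z$.
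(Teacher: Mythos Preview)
Your proof is correct and follows essentially the same route as the paper: reduce via the bijections $x_i\mapsto x_i^{u_i}$ to a linear composition count, and for the last part sum $S_{n-1}(z-x_n^{u_n})$ over $x_n\in\fq^*$, splitting on whether $z$ is a $d$-th power. One small slip: you invoke Lemma~\ref{lemma 1}, but that lemma concerns $\tilde{P}_m$, not $S_m$; the fact you need, $S_{n-1}(a)=S_{n-1}(1)$ for $a\in\fq^*$, follows instead from the explicit formula in Theorem~\ref{compositions} (or by the identical scaling bijection applied to ordered tuples).
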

\begin{proof} Assume first that all of  $u_1,u_2,\ldots, u_n$ are
relatively prime to the $q-1$. Because each mapping $x\mapsto
x^{u_i}$ is a bijection and thus  the number of solutions of the
diagonal equation above is equal to the number of the compositions
of $z$ into $n$ parts. \\
If $\gcd(u_n,q-1)=d$ and $\gcd(u_i, q-1) =1$ for $i=1, \ldots, n-1$,
then we have a  diagonal equation of the form
$$x_1^{u_1}+\cdots +x_{n-1}^{u_{n-1}}=z-x_n^{u_n}.$$
If $z\neq w^d$ for all $w\in \fq^*$ then for all $x_n\in \fq^*$
$z-x_n^{u_n}\neq 0$ and thus the number of solutions to the above
equation is $\displaystyle\sum_{x_n\in
\fq^*}S_{n-1}(z-x_n^{u_n})=(q-1)S_{n-1}(1)$; otherwise, 
for $d$ values of $x_n$ we have $z-x_n^{u_n}=0$. Hence the
number of solutions in this case is $\displaystyle\sum_{x_n\in
\fq^*}S_{n-1}(z-x_n^{u_n})=dS_{n-1}(0)+(q-1-d)S_{n-1}(1)$.
\end{proof}

\section{Proof of Theorem~\ref{main}}

Let $\ell = q-m$.  The assumption $\frac{q}{2} \leq \ell < q-3$
implies that $4 \leq m \leq \frac{q}{2}$. As in \cite{MuWa}, we
denote by $\mathcal{T}$ the family of all subsets of $\fq $ of
cardinality $m$, i.e.,
$$\mathcal{T}=\{T \mid T\subseteq \fq, |T|=m\}.$$
Denote by $\mathcal{M}$ the family of all multisets $M$ of order $q$
containing $0$ with the highest multiplicity $\ell = q- m$ and the
sum of elements in $M$ is equal to $0$, i.e.,
$$
\mathcal{M}=\{M \mid 0\in M, \text{ multiplicity}(0)=q-m, \sum_{b\in
M}b=0\}.
$$

We note that the polynomial with the least degree $q-m$ such that it sends  $q-m$ values to $0$ can be represented by
\begin{equation} \label{poly}
f_{(\lambda ,T)}(x)=\lambda \prod_{s\in \fq\setminus T}(x-s),
\end{equation}
which uniquely determines a mapping
\begin{equation}\label{mapping}
\mathcal{F}:\fq^* \times \mathcal{T}\rightarrow \mathcal{M},
\end{equation}
defined by 
$$(\lambda ,T)\mapsto \text{range}(f_{\lambda, T}(x)).$$

In Lemma 2  \cite{gacsetal} we found an upper bound for the number
$|\text{range}(\mathcal{F})|$ of the images of the polynomial with the
least degree $q-m$ such that it  sends  $q-m$ values  to $0$, when $m <
p$. Using this upper bound, we proved  that, for every $m$ with $3 <
m \leq \min\{p-1, q/2\}$,  there exists a multiset $M$ with
$\sum_{b\in M} b =0$ and the highest multiplicity $q-m$ achieved at
$0\in M$ such that every polynomial over  $\fq$ with
the prescribed range $M$ has degree greater than $q-m$ (Theorem ~1,
\cite{MuWa}). This result disproved  Conjecture 5.1 in
\cite{gacsetal}.  In this section, we drop the restriction of $m <p$
and then use the formula obtained in Theorem~\ref{partitions} to
prove Theorem~\ref{main}, which generalizes Theorem~1 in
\cite{MuWa}. First of all, we prove the following result.

\begin{Lem} Let $q$ be a prime power,  $m\leq \frac{q}{2}$ be a positive integer and $d = \gcd(q-1, m-1)$.
Let  $\mathcal{F}: \fq^*\times \mathcal{T} \rightarrow \mathcal{M}$
be defined as in Equation~(\ref{mapping}). Then
$$|\text{range}(\mathcal{F})|\leq \frac{(q-1)(q-2)\ldots
(q-m+1)}{m!}+\sum_{{i\mid d \atop i >1}
}\phi(i)\binom{\frac{q-1}{i}}{\frac{m-1}{i}}+\frac{\delta (q-1)}{q}\binom{q/p}{m/p},$$
where $\delta=1$ if $p\mid m$ and zero otherwise.
\end{Lem}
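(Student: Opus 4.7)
My plan is to apply Burnside's lemma to the natural action of the affine group $\mathrm{Aff}(\fq)$, of order $q(q-1)$, on the set $\fq^*\times\mathcal{T}$. The key observation is that the substitution $x\mapsto\alpha x+\beta$ sends the polynomial $f_{(\lambda,T)}(x)$ to $f_{(\lambda\alpha^{q-m},\,\alpha^{-1}(T-\beta))}(x)$, and composing a function $\fq\to\fq$ with a bijection does not alter its range as a multiset. Consequently $\mathcal{F}$ is constant on the orbits of this action, so $|\mathrm{range}(\mathcal{F})|$ is at most the number of orbits, which I evaluate via the Cauchy--Frobenius formula $\frac{1}{q(q-1)}\sum_{(\alpha,\beta)}|\mathrm{Fix}(\alpha,\beta)|$.

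For $(\alpha,\beta)$ to fix some $(\lambda,T)$, I need $\alpha^{q-m}=1$, so $\alpha$ lies in the unique subgroup $H\leq\fq^*$ of order $d=\gcd(q-1,q-m)=\gcd(q-1,m-1)$, together with $\alpha T+\beta=T$. Since $\lambda$ is then arbitrary in $\fq^*$, each admissible $(\alpha,\beta,T)$ contributes a factor $q-1$. The identity $(1,0)$ contributes $(q-1)\binom{q}{m}$. For $\alpha=1$ and $\beta\neq 0$, the condition $T+\beta=T$ forces $T$ to be a union of cosets of the additive subgroup $\langle\beta\rangle$ of order $p$, requiring $p\mid m$ and giving $\binom{q/p}{m/p}$ choices of $T$; summing over $\beta\neq 0$ yields $\delta(q-1)^2\binom{q/p}{m/p}$. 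For $\alpha$ of order $k>1$, the affine map $\tau(x)=\alpha x+\beta$ has a unique fixed point $c=\beta/(1-\alpha)$, and $T$ is $\tau$-invariant iff $T-c$ is a union of $\langle\alpha\rangle$-orbits on $\fq$ under multiplication. Every such orbit on $\fq^*$ has size $k$ and $\{0\}$ is the only singleton; since $k\mid d\mid m-1$ forces $\gcd(k,m)=1$, the fixed point must satisfy $c\in T$, and then $T-c$ consists of $\{0\}$ together with $(m-1)/k$ of the $(q-1)/k$ nontrivial orbits, giving $\binom{(q-1)/k}{(m-1)/k}$ choices of $T$.

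Counting elements of $H$ of order $k$ by Euler's totient $\phi(k)$ and ranging $\beta$ over all of $\fq$ in the scaling case, Burnside's formula becomes
\[
|\mathrm{range}(\mathcal{F})|\leq\frac{1}{q(q-1)}\Bigl[(q-1)\binom{q}{m}+\delta(q-1)^2\binom{q/p}{m/p}+q(q-1)\sum_{\substack{k\mid d\\ k>1}}\phi(k)\binom{(q-1)/k}{(m-1)/k}\Bigr].
\]
Dividing through, the first term becomes $\binom{q}{m}/q=(q-1)(q-2)\cdots(q-m+1)/m!$ and the second becomes $\delta(q-1)\binom{q/p}{m/p}/q$, which together with the third summand is exactly the claimed bound. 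The main obstacle is the case analysis for nontrivial scalings: the arithmetic fact that $\gcd(k,m)=1$ whenever $k\mid d\mid m-1$ is what excludes the $c\notin T$ subcase and keeps the fixed-point count as clean as $\binom{(q-1)/k}{(m-1)/k}$; everything else reduces to bookkeeping inside the Cauchy--Frobenius formula.
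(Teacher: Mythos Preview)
Your argument is correct and follows the same route as the paper: bound $|\mathrm{range}(\mathcal{F})|$ by the number of orbits of the affine group acting on $\fq^*\times\mathcal{T}$, then evaluate that number via Burnside's lemma with the same three-way case split (identity, translations $x\mapsto x+\beta$, and affine maps with $\alpha\neq 1$). The only cosmetic differences are that the paper parametrizes the action by $(cx+b,(\lambda,T))\mapsto(c^{m-1}\lambda,cT+b)$ rather than through precomposition (these give the same orbits since $\gcd(q-1,q-m)=\gcd(q-1,m-1)$), and that the paper outsources the $\alpha\neq 1$ count to Lemma~2 of \cite{gacsetal}, whereas you spell out explicitly why the unique affine fixed point $c$ must lie in $T$ via $\gcd(k,m)=1$.
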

\begin{proof}
As in Lemma 2 of \cite{gacsetal} we consider the group $\mathcal{G}$
of all non-constant linear polynomials in $\fq[x]$ acting on the set
$\fq^*\times \mathcal{T}$ with action $\Phi
:(cx+b,(\lambda,T))\mapsto (c^{m-1}\lambda,cT+b)$. All the elements
of the same orbit in $\fq^*\times \mathcal{T}$ are all mapped to the
same range $M\in \mathcal{M}$. Thus we need to find the number $N$
of orbits under this group action.  Using the Burnside's Lemma, we
need to find the number of fixed points $|(\fq^*\times
\mathcal{T})_g|$ in $\fq^*\times \mathcal{T}$ under the action of
$g(x) = cx+b$.  As in Lemma 2 \cite{gacsetal},  for $g(x)=x$ there
are $(q-1)\binom{q}{m}$ elements fixed by $g(x)$. Moreover, if
$g(x)=cx+b$, $c\neq 1$ then elements are fixed by $g(x)$ only if $i
= ord(c)  \mid d = \gcd (q-1, m-1)$ and in this case we have
$|(\fq^*\times
\mathcal{T})_g|=(q-1)\binom{\frac{q-1}{i}}{\frac{m-1}{i}}$. Under
the assumption $m <p$ in Lemma 2 \cite{gacsetal}, we don't need to
consider $g(x)=x+b$, $b\neq 0$,  because it  has $p$-cycles of the
form $(x,x+b,\ldots ,x+(p-1)b)$ and has no fixed elements. However,
for arbitrary $m$, we must consider this case.  In fact, if $g(x) =
x+b$ fixes some subset $T$ of $\fq$ with $m$ elements then we must
have $p\mid m$ and  $T$ consists of $p$-cycles. In particular, there
are $\binom{\frac{q}{p}}{\frac{m}{p}}$ of such subsets $T$ fixed by
$g(x) =x+b$ for each $b\in \fq^*$. Varying $\lambda$ and $b$, we
therefore obtain $|(\fq^*\times \mathcal{T})_g|=
\delta (q-1)^2\binom{q/p}{m/p}$. Now using Burnside's Lemma we obtain
\begin{eqnarray*}
N &=&\frac{1}{|\mathcal{G}|}\sum_{g\in
\mathcal{G}}|(\fq^*\times\mathcal{T})_g| \\
&=& \frac{1}{q(q-1)}\Bigl((q-1)\binom{q}{m}+q(q-1)\sum_{i>0,i\mid
d}\phi(i)\binom{\frac{q-1}{i}}{\frac{m-1}{i}}+ \delta (q-1)^2\binom{q/p}{m/p}\Bigr)\\
&=&\frac{1}{q}\binom{q}{m}+\sum_{i>0,i\mid
d}\phi(i)\binom{\frac{q-1}{i}}{\frac{m-1}{i}}+\frac{\delta (q-1)}{q}\binom{q/p}{m/p}.
\end{eqnarray*}
\end{proof}

In order to prove Theorem~\ref{main}  it is clear that we only need
to show
\begin{equation}\label{Gacseq}
\frac{(q-1)(q-2)\ldots (q-m+1)}{m!}+\sum_{{i\mid d \atop i >1}
}\phi(i)\binom{\frac{q-1}{i}}{\frac{m-1}{i}}+\frac{\delta (q-1)}{q}\binom{q/p}{m/p}
<\tilde{P}_m(0).\end{equation}

By Theorem~\ref{partitions}, it is enough to show

\begin{equation}\label{Gacseq1}
\frac{(q-1)(q-2)\ldots (q-m+1)}{m!}+\sum_{{i\mid d \atop i >1}
}\phi(i)\binom{\frac{q-1}{i}}{\frac{m-1}{i}}+\frac{q-1}{q}\binom{q/p
- 1+j}{j}< \frac{1}{q}\binom{q+m-2}{m}.
\end{equation}
for $m = jp + 1$ and
\begin{equation}\label{Gacseq0}
\frac{(q-1)(q-2)\ldots (q-m+1)}{m!}+\sum_{{i\mid d \atop i >1}
}\phi(i)\binom{\frac{q-1}{i}}{\frac{m-1}{i}} <
\frac{1}{q}\binom{q+m-2}{m},
\end{equation}
for all other cases, because $\frac{q-1}{q}\binom{q/p}{m/p}  =
\frac{q-1}{q}\binom{q/p}{j} \leq \frac{q-1}{q}\binom{q/p - 1+j}{j}$
when $m=jp$ and $j\geq1$.

For the cases $m=4$ and $m=5$, because $q\geq 2m$,  we can check
directly that Inequality~(\ref{Gacseq0}) holds and thus
Inequality~(\ref{Gacseq}) holds.

\par We now show Inequalities~(\ref{Gacseq1}) and ~(\ref{Gacseq0}) hold for $m>5$ by using a combinatorial argument. Let $G=<a>$ be a cyclic group of
order $q-1$ with generator $a$.   Let $\mathcal{M^\prime}$ be the set of all multisets with
$m$ elements chosen from $G$. Then $|\mathcal{M^\prime}|=\binom{q-2+m}{m}$.
To estimate the left hand side of Inequalities~(\ref{Gacseq1}) and ~(\ref{Gacseq0})
 we count now the number
of multisets in some subsets of $\mathcal{M^\prime}$ defined as follows. These subsets of multisets of $m$ elements are defined from 
subsets of  $k$-subsets of $G$ when $k \leq m$.  First of all,  let
$\mathcal{M}_0$ be the set of all subsets of $G$ with $m$ elements.
So $\mathcal{M}_0 \subseteq \mathcal{M^\prime}$ and $|\mathcal{M}_0 |=
\binom{q-1}{m}$.

Let $\mathcal{A}$ be the set of all subsets of $G$ with $m-1$
elements. For each $A=\{a^{u_1},a^{u_2},\ldots ,a^{u_{m-1}}\} \in
\mathcal{A}$ where $0\leq u_1<u_2<\ldots <u_{m-1}<q-1$ we can find a
multiset $M=\{a^{u_1},a^{u_1},a^{u_2},a^{u_3},\ldots ,a^{u_{m-1}}\}$
corresponding to $A$ in the unique way.  We can use notation
$s^{(i)}$ to denote  an element $s$ in a multiset $M$ with
multiplicity $i$. Hence the above multiset $M$ can also be denoted by
$$M=\{ (a^{u_1})^{(2)},a^{u_2},a^{u_3},\ldots ,a^{u_{m-1}}\}.$$ The
set  of all these multisets $M$, denoted by $\mathcal{M}_1$,  has
$|\mathcal{A}|=\binom{q-1}{m-1}$ elements. Moreover
$\mathcal{M}_0\cap \mathcal{M}_1 =\emptyset $. Now let
$\mathcal{M}_{01}=\mathcal{M}_1\cup\mathcal{M}_1$. Then
$|\mathcal{M}_{01}|=\binom{q-1}{m}+\binom{q-1}{m-1}=\binom{q}{m}$.

For each $i$ satisfying $m-1>i \geq 2$ and $i\mid d$, we let
$S_i=<a^i>$ be a cyclic subgroup of $G$ with $\frac{q-1}{i}$
elements. From each set $\mathcal{C}_i$ of all subsets of $S_i$ with
$\frac{m-1}{i}$ elements, we can define two disjoint subclasses of
$\mathcal{M}$ containing multisets with $m$ elements in $G$
corresponding to $\mathcal{C}_i$.

First, let $B=\{a^{u_1i},a^{u_2i},\ldots, a^{u_{\frac{m-1}{i}}i}\}$
be a subset of $S_i$ where $0\leq u_1<u_2<\ldots <\frac{q-1}{i}$.
For each fixed $t$ such that $0\leq t<i$ and $\gcd(i,t)=1$, we can
construct a multiset corresponding to $B$ as follows:
$$M=\{(a^ta^{u_1i})^{(i)},(a^ta^{u_2i})^{(i)},\ldots
,(a^ta^{u_{\frac{m-1}{i}}i})^{(i)},a_m\}$$ where $a_m$ is
arbitrarily element in $G$. For each fixed $t$  this class of 
multisets formed from $\mathcal{C}_i$ is denote by
$\mathcal{M}_i^t$. Then
$|\mathcal{M}_i^t|=(q-1)\binom{\frac{q-1}{i}}{\frac{m-1}{i}}$.

Secondly, for  $B=\{a^{u_1i},a^{u_2i},\ldots,
a^{u_{\frac{m-1}{i}}i}\}\in \mathcal{C}_i$  and each fixed $t$, we
can construct another  multiset
$$\tilde{M}=\{ {\bf (a^{t+1}a^{u_1i})^{(i)}} ,(a^ta^{u_2i})^{(i)},\ldots
,(a^ta^{u_{\frac{m-1}{i}}i})^{(i)}, {\bf 1}\},$$ corresponding to $B$.  The
set of these multisets is denoted by $\tilde{\mathcal{M}}_i^t$. Then
$|\tilde{\mathcal{M}}_i^t|=\binom{\frac{q-1}{i}}{\frac{m-1}{i}}$.

Note that $i\leq \frac{m-1}{2}$ implies $\mathcal{M}_i^t\cap
\tilde{\mathcal{M}}_i^t=\emptyset$.  Hence we have
\[
|\mathcal{M}_i|= \left| {\displaystyle \bigcup_{1\leq t<i \atop
\gcd(i,t)=1} \mathcal{M}_i^t\cup \tilde{\mathcal{M}}_i^t } \right|
=\phi(i)
\left((q-1)\binom{\frac{q-1}{i}}{\frac{m-1}{i}}+\binom{\frac{q-1}{i}}{\frac{m-1}{i}}\right)
=q\phi(i)\binom{\frac{q-1}{i}}{\frac{m-1}{i}}.
\]

  Finally,  if $m-1\nmid q-1$ then we let $\mathcal{M}_m=\emptyset $.
  Otherwise, if $(m-1)\mid q-1$ then we let $\mathcal{M}_{m-1}^t$ contains all
the multisets of the form $M=\{(a^{t+j(m-1)})^{(m-1)},a_m\}$, for
$j=0,1,\ldots, \frac{q-1}{m-1}-1$, any positive integer $t < m-1$
with $\gcd(m-1,t)=1$, and any $a_m\in G$. Let
$\tilde{\mathcal{M}}_{m-1}^t$ contain all the multisets of the form $\{(a^{t+j(m-1)})^{(m-2)},(a^{m-1})^{(2)}\}$.
It is obvious that $a^{m-1}\neq a^{t+j(m-1)}$. By comparing the
multiplicities of two multisets we see that $\mathcal{M}_{m-1}^t\cap
\tilde{\mathcal{M}}_{m-1}^t=\emptyset$. Moreover,
\begin{eqnarray*}
|\mathcal{M}_{m-1}| &=& \left| \bigcup_{1\leq t<m-1 \atop
\gcd(m-1,t)=1}\mathcal{M}_{m-1}^t\cup \tilde{\mathcal{M}}_{m-1}^t \right|  \\
&=&\phi(m-1)\left((q-1)\binom{\frac{q-1}{m-1}}{1}+\binom{\frac{q-1}{m-1}}{1} \right)\\
&=& q \phi(m-1)\binom{\frac{q-1}{m-1}}{\frac{m-1}{m-1}}.
\end{eqnarray*}

\par Finally, if $m\neq jp+1$ for some $j\geq 1$ we let
$\mathcal{M}_m=\emptyset $. Otherwise, if $m=jp+1$ for some $j\geq
1$  we let $C = \{s_1,s_2,\ldots, s_{q/p}\}$ be a subset of $G$ with $q/p<q-1$ elements. For each
subset of $j$ elements from $C$ we
find a corresponding   multiset $M$ in $\mathcal{M}_m$ from
$\mathcal{M}$ in the following way
$$M=\{s_1^{(p)},s_2^{(p)},\ldots, s_j^{(p)},a_m\}$$
where $a_m$ is arbitrary chosen to be an element from $G$. Thus
there are $(q-1)\binom{q/p+j-1}{j}$ multisets in $\mathcal{M}_m$.
Obviously,   $\mathcal{M}_m$ is disjoint $\mathcal{M}_i$ where $i
\mid \gcd (m-1, q-1)$ because the multiplicity of at least one of
its element is $p\nmid q-1$. Indeed, it could possibly have common elements
only with $\mathcal{M}_{m-1}$ but in this case $m-1=jp\nmid q-1$ so
$\mathcal{M}_{m-1}=\emptyset$. Now
$|\mathcal{M}_m|=(q-1)\binom{q/p+j-1}{j}$.

 Define $\delta^\prime =0$ if $m\neq
jp+1$ for some $j$ and $\delta^\prime =1$ if $m=jp+1$. Then we obtain
\begin{eqnarray*}
|\mathcal{M}_{LHS}| &:=& \left| \mathcal{M}_{01}\bigcup
\left(\bigcup_{i>1 \atop i\mid
\gcd(m-1,q-1)}\mathcal{M}_i\right) \bigcup \mathcal{M}_m \right | \\
&=&\binom{q}{m}+q\sum_{{i\mid d \atop i >1}
}\phi(i)\binom{\frac{q-1}{i}}{\frac{m-1}{i}}+ \delta^\prime (q-1)\binom{q/p+(m-1)/p-1}{(m-1)/p}.
\end{eqnarray*}

We note that the multiset $\{1,1,1,a,a^2,\ldots ,a^{m-3}\}$ is not
included in the $\mathcal{M}_{LHS}$ and thus
$|\mathcal{M}_{LHS}|<|\mathcal{M^\prime}|$. Dividing both sides by $q$,  we have
\begin{equation} \label{upperbound}
\frac{1}{q} \binom{q}{m}+\sum_{{i\mid d \atop i >1}
}\phi(i)\binom{\frac{q-1}{i}}{\frac{m-1}{i}}+\frac{\delta^\prime (q-1)}{q}\binom{q/p+(m-1)/p-1}{(m-1)/p}
<\frac{1}{q} \binom{q+m-2}{m}.
\end{equation}
Hence both Inequalities~(\ref{Gacseq1}) and (\ref{Gacseq0}) are
satisfied. This completes the proof of Theorem~\ref{main}.


\begin{thebibliography}{10}

\bibitem{BEW} B. C. Berndt, R. J. Evans, K. S. Williams,  Gauss and Jacobi sums.
Canadian Mathematical Society Series of Monographs and Advanced
Texts. A Wiley-Interscience Publication. John Wiley \& Sons, Inc.,
New York, 1998.


\bibitem{gacsetal} A. G\'{a}cs, T. H\'{e}ger, Z. L. Nagy, D. P\'{a}lv\"{o}lgyi, Permutations,
hyperplanes and polynomials over finite fields, {\it Finite Field
Appl.}  \textbf{16} (2010), 301-314.

\bibitem{LiWan} J. Li and D. Wan, On the subset sum problem over finite
fields, {\it Finite Field Appl.}, \textbf{14} (2008), 911-929.

\bibitem{MuWa} A. Muratovi\'{c}-Ribi\'{c} and Q. Wang,
On a conjecture of polynomials with prescribed range, \emph{Finite
Field Appl. (2012)}, doi:10.1016/j.ffa.2012.02.004.

\bibitem{Stanley} R. P. Stanley, Enumerative Combinatorics, Vol I, Cambridge University Press, 1997.

\end{thebibliography}
\end{document}